\newtheorem{theorem}{Theorem}[section]
\newtheorem{claim}[theorem]{Claim}
\newtheorem{lemma}[theorem]{Lemma}
\newtheorem{corollary}[theorem]{Corollary}
\theoremstyle{definition}
\def\epsilon{\varepsilon}
\title{Fractional colorings of partial $t$-trees with no large clique}
\author{Peter Bradshaw}
\address{Department of Mathematics, University of Illinois Urbana-Champaign}
\email{pb38@illinois.edu}
\thanks{This project was partially funded by NSF RTG grant DMS-1937241}
\begin{document}
\maketitle
\begin{abstract}
Dvo\v{r}\'ak and Kawarabayashi~\cite{DK} asked, what is the largest chromatic number attainable by a graph of treewidth $t$ with no $K_r$ subgraph?
In this paper, we consider the fractional version of this question.
We prove that if $G$ has treewidth $t$ and clique number $2 \leq \omega \leq t$, then $\chi_f(G) \leq t + \frac{\omega - 1}{t}$, and we show that this bound is tight for $\omega = t$. We also show that for each value $0 < c < \frac{1}{2}$, there exists a graph $G$ of a large treewidth $t$ and clique number $\omega = \lfloor (1 - c)t \rfloor$ satisfying $\chi_f(G) \geq t + 1 + \frac{1}{2}\log(1-2c) + o(1)$, which is approximately equal to the upper bound for small values $c$.
\end{abstract}

\section{Introduction}
\subsection{Background}
Determining the number of colors needed to 
properly color a graph belonging to a specific graph family is one of the oldest problems in graph theory.
For many graph families $\mathcal G$, the maximum chromatic number $\chi(G)$ achieved by a graph $G \in \mathcal G$ is achieved when $G$ is a clique. For example, the maximum chromatic number attainable by a planar graph is $4$, which is achieved by $K_4$. Similarly, the maximum chromatic number of a graph $G$ of maximum degree $\Delta$ is $\Delta + 1$, which is achieved by $K_{\Delta + 1}$. Other graph classes satisfying this property include graphs of bounded genus, bounded degeneracy, or bounded treewidth.

Given that the maximum chromatic number over all graphs in a family $\mathcal G$ is often attained by a clique, it is natural to ask about the maximum value $\chi(G)$ attained by a graph $G \in \mathcal G$ whose clique number $\omega(G)$ is bounded.
When $\mathcal G$ is the family of graphs of maximum degree $\Delta$, Brooks' theorem states that $\chi(G) \leq \Delta$ whenever $\omega(G) \leq {\Delta}$.
Moreover,
Borodin and Kostochka \cite{BK} conjectured that if $\Delta \geq 9$, then 
$\chi(G) \leq \Delta - 1$ for all $G \in \mathcal G$ satisfying $\omega(G) \leq \Delta - 1$, and Reed
\cite{ReedBK}
proved that this conjecture holds for $\Delta \geq \Delta_0$, where $\Delta_0 \leq  10^{14}$
is some large constant. Reed conjectured further that
$\chi(G) \leq \lceil \frac{\Delta + \omega(G) + 1}{2} \rceil$
for every graph $G \in \mathcal G$, and he proved in \cite{ReedConj} that there exists a universal value $\epsilon > 0$ such that $\chi(G) \leq \epsilon \omega(G) + (1 - \epsilon) (\Delta + 1)$. 
In the special case that $\omega(G) \leq r$ for a fixed constant $r$, Johansson \cite{Johansson} proved that $\chi(G) \leq 200 (r + 1) \frac{\Delta \log \log \Delta}{\log \Delta}$, and when $G$ is triangle-free, Molloy \cite{Molloy} proved that $\chi(G) \leq (1 + o(1)) \frac{\Delta}{\log \Delta}$. 

When we consider families of graphs with bounded genus, we see a similar pattern in which forbidding certain clique subgraphs reduces the maximum chromatic number attained by graphs in our family.
For instance, a planar graph $G$ may satisfy $\chi(G) = 4$, but Gr\"{o}tzsch's theorem 
\cite{Grotzsch}
states that $\chi(G) \leq 3$ whenever $G $ is triangle-free.
Similarly,
a graph $G$ of genus $g$ may satisfy $\chi(G) =  \Theta ( g^{1/2})$, but $\chi(G) = O \left ( \frac{g}{\log g} \right )^{1/3}$ whenever $G$ is triangle-free \cite{GT}.

In this paper, we focus on graphs of bounded treewidth, which are defined as follows. First, we define a \emph{$t$-tree} as a graph $G$ obtained by starting with a $t$-clique $K$ and then iteratively adding vertices $v$ for which $N(v)$ induces a $K_t$.
When $v$ is added to $G$, the $t$ neighbors of $v$ are called \emph{back-neighbors} of $v$. Note that by construction, each vertex of $G$ has exactly $t$ back-neighbors, except for the vertices of the initial clique $K$.
A \emph{partial $t$-tree} is a subgraph of a $t$-tree. Then, given a graph $G$, the \emph{treewidth} of $G$ is defined as the minimum value $t$ for which $G$ is a partial $t$-tree. For a more traditional definition of treewidth, see \cite{RS5}.
By ordering the vertices of a $t$-tree in the order that they are added to the graph, one sees that every partial $t$-tree $G$ is $t$-degenerate, and hence $\chi(G) \leq t + 1$. Furthermore, since $K_{t+1}$ is a $t$-tree, this upper bound is tight. 

Dvo\v{r}\'ak and Kawarabayashi \cite{DK} considered the problem of determining the largest value
$\chi(G)$
obtained by a partial $t$-tree $G$
when $\omega(G)$ is bounded. They found that when $\omega(G) \leq 2$, i.e.~when $G$ is triangle-free, it holds that $\chi(G) \leq \lceil \frac{t+3}{2} \rceil$. They also found that for each value $3 \leq \omega \leq t$, there exists a partial $t$-tree $G$ of clique number $\omega$ satisfying $\chi(G) > (1 - \frac{1}{2^{\omega-3}})t$,
and they left the task of determining upper bounds on $\chi(G)$ for these larger values of $\omega$ as an open problem.

\subsection{Fractional chromatic number}

The fractional chromatic number of a graph is defined as follows \cite{FracDef}. Given a graph $G$, a \emph{fractional coloring} is an assignment of a set $\phi(v) \subseteq \mathbb R$ of Lebesgue measure $\mu(\phi(v)) = 1$ to each vertex $v \in V(G)$ so that $\phi(u) \cap \phi(v) = \emptyset$ for each edge $uv \in E(G)$. Then, the \emph{fractional chromatic number} of $G$, written $\chi_f(G)$, is the infimum of $\mu \left ( \bigcup_{v \in V(G)} \phi(v) \right )$ over all fractional colorings $\phi$ of $G$.
Johnson and Rodger \cite{FracDef} point out that the infinum in this definition is in fact a minimum; that is, a graph $G$ satisfying $\chi_f(G) = k$ has a fractional coloring $\phi:V(G) \rightarrow 2^{\mathbb R}$ satisfying  $\mu \left ( \bigcup_{v \in V(G)} \phi(v) \right ) = k$.
Given a fractional coloring $\phi:V(G) \rightarrow 2^{\mathbb R}$, 
if $\bigcup_{v \in V(G)} \phi(v)  \subseteq \mathcal C$, then we say that 
$G$ is colored using the color set $\mathcal C$.

Equivalently, for an integer $b \geq 1$, a \emph{proper $b$-coloring} of $G$ is an assignment $\phi: V(G) \rightarrow \binom{\mathbb N}{b} $ of $b$ distinct colors to each vertex $v \in V(G)$ such that $\phi(u) \cap \phi(v) = \emptyset$ for each edge $uv \in E(G)$. If $G$ has a proper $b$-coloring $\phi:V(G) \rightarrow \binom{[k]}{b}$ using the color set $[k] = \{1, \dots, k\}$, then 
we say that $G$ is $(k,b)$-colorable.
We define $\chi_b(G)$ to be the minimum value $k$ so that $G$ is $(k,b)$-colorable; then, $\chi_f(G) = \inf_{b \rightarrow \infty} \frac{\chi_b(G)}{b}$.

Since a proper graph coloring is a proper $b$-coloring with $b = 1$, it follows that $\chi_f(G) \leq \chi(G)$ for every graph $G$.
A graph's chromatic number and fractional chromatic number may be equal; for example, $\chi(K_r) = \chi_f(K_r) = r$ for each $r \geq 1$. On the other hand, 
the difference between the chromatic number and the fractional chromatic number of a single graph may be arbitrarily large. As an example, we consider the \emph{Kneser graph} $K(n,k)$, whose vertex set is the collection of subsets of $\{1, \dots, n\}$ of size $k$, and whose edge set consists of the pairs of vertices whose corresponding subsets are disjoint.
When $G = K(n,k)$,
the chromatic number of $G$ is
$\chi(G) = n - 2k + 2$ \cite{Kneser},
while the fractional chromatic number of $G$ is 
$\chi_f(G) = n/k$.

Similarly to the chromatic number, it is natural to ask about the maximum value $\chi_f(G)$ attained by a graph $G$ in some family $\mathcal G$ when the clique number $\omega(G)$ is bounded.
In this vein, Harris \cite{Harris} conjectured that $\chi_f(G) = O \left ( \frac{d}{\log d} \right )$ for every $d$-degenerate triangle-free graph $G$, and this intriguing conjecture is still open.
    Additionally, one may ask for a fractional version of Borodin and Kostochka's conjecture---that is, whether $\chi_f(G) \leq \Delta - 1$ for all graphs $G$ of maximum degree $9 \leq \Delta < \Delta_0$ with no $K_{\Delta}$ subgraph, where $\Delta_0$ is the value from Reed's result \cite{ReedBK}. Partial progress has been been made toward answering this question; King, Lu, and Peng \cite{King} showed that for all such graphs $G$, $\chi_f(G) \leq \Delta- \frac{2}{67}$, and Hu and Peng \cite{HP} recently announced a better bound of $\chi_f(G) \leq \Delta - \frac{1}{8}$. In fact, these upper bounds hold even for all $\Delta \geq 4$, except when $G$ is the squared cycle $C_8^2$ or the strong product $C_5 \boxtimes K_2$.

\subsection{Our results}
In this paper, similarly to Dvo\v{r}\'ak and Kawarabayashi \cite{DK}, we consider the question of determining the largest value $\chi_f(G)$ obtained by a partial $t$-tree $G$ when the clique number of $G$ is at most some fixed value $\omega$. 
As in Borodin and Kostochka's conjecture, we mainly consider cases where $\omega$ is close to the maximum possible clique number, which in our case of partial $t$-trees is $t+1$. We find that whenever $\omega \leq t$, the maximum value $\chi_f(G)$ attainable by a partial $t$-tree $G$ with clique number $\omega$ is strictly less than $t+1$. Specifically, we prove the following theorem.

\begin{theorem}
\label{thm:intro_UB}
Let $G$ be a graph of treewidth $t \geq 1$, and let $2 \leq \omega \leq t$. If the largest clique in $G$ has at most $\omega$ vertices, then $\chi_f(G) \leq t + \frac{\omega - 1}{t}$.
\end{theorem}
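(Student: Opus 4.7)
The plan is to construct a $(k, t)$-coloring of $G$ with $k = t^2 + \omega - 1$, which yields $\chi_f(G) \leq k/t = t + (\omega-1)/t$. Since $G$ is a partial $t$-tree, I would embed $G$ in a $t$-tree $H$ on the same vertex set and fix the build order $v_1, \ldots, v_n$ of $H$ in which $\{v_1, \ldots, v_t\}$ is the initial $t$-clique and each $v_i$ with $i > t$ has a back-neighbor set $B_i$ of size $t$ that induces a $K_t$ in $H$. I would then assign color sets $\phi(v_i) \in \binom{[k]}{t}$ in build order, ensuring $\phi(u) \cap \phi(v) = \emptyset$ for every $G$-edge $uv$.

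When processing $v_i$, the already-colored $G$-neighbors lie in $B_i$. If $|N_G(v_i) \cap B_i| \leq t-1$, the forbidden colors number at most $(t-1)t = t^2 - t$, leaving at least $k - (t^2 - t) = t + \omega - 1 \geq t$ colors available, so a greedy extension succeeds. The interesting sub-case is $|N_G(v_i) \cap B_i| = t$: here $v_i$ is $G$-adjacent to every vertex of $B_i$, and since $\omega(G) \leq t$ forbids $\{v_i\} \cup B_i$ from forming a $K_{t+1}$ in $G$, the set $B_i$ must contain at least one $G$-non-edge. This non-edge is the only source of slack that allows the coloring to succeed at $v_i$.

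To turn the existence of this non-edge into enough available colors, I would maintain the invariant that for every $t$-clique $K$ of $H$ which is not also a $t$-clique of $G$, $|\bigcup_{u \in K} \phi(u)| \leq k - t = t^2 - t + \omega - 1$; this guarantees $v_i$ sees at least $t$ free colors in the hard sub-case. The main obstacle is propagating this invariant as each new vertex is added: coloring $v_i$ creates up to $t$ new $H$-cliques of size $t$ (one for each $(t-1)$-subset of $B_i$), each of which must satisfy the invariant whenever it is not a $G$-clique, and in the hard sub-case the $t$ available colors for $\phi(v_i)$ are actually forced, leaving no freedom. I expect the proof to use a strengthened invariant that also controls individual $H$-edges which are $G$-non-edges --- for example, requiring each such pair of vertices to share at least $t - \omega + 1$ colors --- and to exploit the slack $k - t^2 = \omega - 1$ together with careful choices on the initial $t$-clique (colored so that its $G$-non-edges are witnessed by overlapping color sets) to cascade the invariant through the induction.
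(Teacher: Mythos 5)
Your framework is the right one --- the paper also produces a $(t^2+\omega-1,\,t)$-coloring by embedding $G$ in a $t$-tree $H$ and coloring greedily along the build order (phrased there as a strategy for Alice in an online game) --- but the core mechanism that makes the greedy step go through is missing from your sketch, and the invariants you tentatively propose do not work as stated. The paper's invariant is pairwise and exact: for every edge of $H$ that is \emph{not} an edge of $G$, the two endpoints' color sets meet in \emph{exactly one} color (and $G$-adjacent pairs are disjoint, as usual). A short counting lemma (Lemma~\ref{lem:back}: order the clique starting from a maximum $G$-subclique; every later vertex re-uses at least one earlier color) then converts this pairwise condition into the union bound $\lvert\bigcup_{u\in K}\gamma(u)\rvert\le t^2-t+\omega'$ for any $t$-clique $K$ of $H$ whose largest $G$-clique has $\omega'$ vertices. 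Your proposed strengthening to ``share at least $t-\omega+1$ colors'' for non-$G$-adjacent pairs points in the wrong direction: forcing large overlaps between a vertex $u$ and several pairwise non-adjacent vertices quickly exhausts $\gamma(u)$ and conflicts with the disjointness required across $G$-edges; exactly one shared color is what the bookkeeping tolerates.

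Two further concrete problems. First, your case split is on $\lvert N_G(v_i)\cap B_i\rvert\le t-1$ versus $=t$, and in the former case you color purely greedily; but pure greedy destroys the invariant on the $t$ new $H$-cliques through $v_i$, so the argument cannot be iterated --- this is exactly the ``main obstacle'' you flag without resolving. The paper instead first inserts into $\gamma(v_i)$ one ``private'' color from each non-$G$-neighbor $u\in B_i$ (such a color exists because $u$ meets each other vertex of $B_i$ in at most one color), which simultaneously maintains the invariant and reduces the number of colors still needed to $\ell=\lvert N_G(v_i)\cap B_i\rvert$; the real case split is then $\ell\le\omega-1$ (trivial count) versus $\ell\ge\omega$. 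Second, in your hard case you extract only ``at least one non-edge inside $B_i$'' from $\omega(G)\le t$; the proof needs the full hypothesis $\omega(G)\le\omega$ applied to the set of $G$-neighbors of $v_i$ (their largest $G$-clique is at most $\omega-1$, since $v_i$ extends it), fed into Lemma~\ref{lem:back} to cap their color union at $\ell^2-\ell+\omega-1$ rather than $t\ell$. Without that, the count $k-t^2=\omega-1<t$ leaves too few colors. So the proposal identifies the right parameters and the right shape of the difficulty, but the decisive invariant and the lemma that exploits it are absent.
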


Theorem \ref{thm:intro_UB} tells us that when $G$ has treewidth $t$ and clique number $\omega$, it holds that $\chi_f(K_{t+1}) - \chi_f(G) \geq \frac{t - \omega + 1}{t}$; in other words, the fractional chromatic number of $G$ is at least $\frac{t - \omega + 1}{t}$ below the maximum attainable fractional chromatic number of a partial $t$-tree.
When $\omega = t$, Theorem \ref{thm:intro_UB} gives an upper bound of $\chi_f(G) = t + 1 - \frac{1}{t}$, and we will see that this upper bound is tight.
Furthermore, when $\omega = \lfloor  (1-c)t \rfloor $ for a constant $0 < c < \frac{1}{2}$, Theorem \ref{thm:intro_UB} gives an upper bound of $\chi_f(G) < t + 1 - c $, 
implying that $\chi_f(K_{t+1}) - \chi_f(G) > c $.
The following theorem tells us that when $c$ is small, 
this gap of size $c$ is close to best possible.

\begin{theorem}
\label{thm:intro_LB}
Let $0 < c < \frac{1}{2}$, and let $t$ be a large integer. There exists a graph $G$ of treewidth $t$ and clique number $\lfloor (1 - c)t \rfloor$ satisfying 
\[ \chi_f(G) \geq t + 1 + \frac{1}{2}\log (1-2c) - o(1).\]
\end{theorem}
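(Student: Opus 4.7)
The plan is to exhibit a partial $t$-tree $G$ with clique number $\omega := \lfloor (1-c)t \rfloor$ and fractional chromatic number at least $t + 1 + \tfrac{1}{2}\log(1-2c) - o(1)$, then verify each parameter. My strategy comprises three steps: a concrete construction of $G$, routine bounds on its treewidth and clique number, and a lower bound on $\chi_f(G)$---the last being the technical heart of the argument.

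For the construction, I would form $G$ as a randomized (or carefully structured) subgraph of a large $t$-tree skeleton $T$ on $n$ vertices. Concretely, for each vertex $v$ added to $T$ beyond the initial $t$-clique, I retain in $G$ a chosen subset $L_v$ of its $t$ back-edges, of size exactly $\omega - 1$. This subset could be drawn uniformly at random from $\binom{N_T(v)}{\omega - 1}$, or selected via a combinatorial design (for instance, a Kneser-type or shift-based scheme) that imparts useful symmetry. Because $G \subseteq T$, we have $\operatorname{tw}(G) \leq t$ automatically; a straightforward induction on the vertex order of $T$ yields $\omega(G) \leq \omega$, since any clique in $G$ containing a newly added vertex $v$ consists of $v$ together with a subset of its $\omega - 1$ retained back-neighbors.

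The main step is the lower bound on $\chi_f(G)$. My approach is to invoke LP duality by exhibiting a vertex weighting $w \colon V(G) \to \mathbb{R}_{\geq 0}$ satisfying
\[
\frac{\sum_{v \in V(G)} w(v)}{\max_{I} \sum_{v \in I} w(v)} \;\geq\; t + 1 + \tfrac{1}{2}\log(1-2c) - o(1),
\]
where the maximum ranges over independent sets $I$ of $G$. In a (nearly) vertex-transitive construction, a uniform weighting suffices, which reduces the task to showing $\alpha(G) \leq |V(G)|/\bigl(t + 1 + \tfrac{1}{2}\log(1-2c)\bigr)$. I expect this bound to come from an entropy or first-moment estimate: a fixed set $S$ of vertices is independent in $G$ only if many specific back-edges of $T$ are discarded in the construction, an event whose probability decays at a rate determined by $c$. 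Balancing this decay against the $\binom{n}{|S|}$ candidate sets should place the independence threshold exactly at the claimed level, with the factor $\tfrac{1}{2}\log(1-2c)$ emerging from the square-root-like scaling of this threshold.

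The principal obstacle is recovering the \emph{precise} constant $\tfrac{1}{2}\log(1-2c)$ in the independence-number bound. A crude first-moment union bound will almost certainly lose a constant factor, so I anticipate needing either a second-moment refinement, a deletion argument, or else an explicit deterministic construction (e.g., one based on finite geometries, binary codes, or iterated blow-ups of a small graph) whose independence number can be computed exactly. Once this sharp bound on $\alpha(G)$ is in hand, combining it with the treewidth and clique-number bounds already observed and with $\chi_f(G) \geq |V(G)|/\alpha(G)$ will yield the theorem.
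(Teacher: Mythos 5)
There is a fatal gap in the construction itself, before the independence-number analysis even begins. If each vertex $v$ beyond the initial clique retains only $\omega-1$ of its $t$ back-edges, then ordering the vertices as they are added shows that $G$ is $(\omega-1)$-degenerate (apart from the bounded-size initial clique, which must itself be sparsified or it already contains a $K_t$). Consequently $\chi(G)\leq \omega$, and more to the point every $(\omega-1)$-degenerate graph on $n$ vertices has $\alpha(G)\geq (n-O(t))/\omega$, so the quantity $|V(G)|/\alpha(G)$ you plan to use as a lower bound is at most $\omega+o(1)=(1-c)t+o(t)$. This falls far short of the target $t+1+\tfrac{1}{2}\log(1-2c)\approx t+1-c-c^2$ for small $c$. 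No choice of random subset or combinatorial design for $L_v$ can rescue this: the problem is the back-degree, not the structure. To keep the clique number at $\omega$ while keeping $\chi_f$ near $t+1$, you must retain back-degree close to $t$ at every vertex and instead delete a carefully structured, sparse set of edges (in the paper's language, color a few edges red) so that every large clique of the host $t$-tree loses at least one edge while most vertices keep almost all of their neighbors.

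The paper's actual argument is quite different in both the construction and the lower-bound mechanism. It builds the graph recursively through an online game: Bob assembles a $(t+1)$-clique $K'$ of the host $t$-tree in which the deleted edges form a perfect matching on $2(t-\omega+1)$ of its vertices, so that no three color sets of $K'$ intersect and inclusion--exclusion gives $\mu(\phi(V(K')))\geq t+1-\sum_i \mu(\phi(u_i)\cap\phi(v_i))$. Each matched pair $u_i v_i$ is found by embedding a smaller hard instance $H_i$ (of treewidth $t-2i+1$ and clique number $\omega-i+1$) fully joined to the partial clique; since $\chi_f(H_i)$ is large, some vertex $u_i\in H_i$ must share measure at most $1/\chi_f(H_i)$ with $\phi(v_i)$. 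This yields the recursion $f(t,\omega)\geq t+1-\sum_{i=1}^{t-\omega+1} 1/f(t-2i+1,\omega-i+1)$, and the constant $\tfrac{1}{2}\log(1-2c)$ emerges from comparing the resulting harmonic-type sum $\sum_i \frac{1}{t-2i+1}$ with an integral --- not from any independence-number threshold. If you want to pursue your route, you would need both a denser construction and a dual certificate more refined than the uniform weighting, since these recursive extremal graphs are far from vertex-transitive and the bound $\chi_f\geq n/\alpha$ is generally too lossy here.
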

When $c$ is small, the approximation $ \log(1-2c) \approx -2c -2  c^2$ tells us that this lower bound is roughly equal to $t + 1 - c - c^2$, meaning that the gap 
$\chi_f(K_{t+1}) - \chi_f(G)$ cannot be much larger than $c + c^2$.

The paper is organized as follows. In Section \ref{sec:game}, we introduce an online coloring game
between Alice and Bob, in which Alice wishes to fractionally color a graph with a color set of small measure, and Bob wishes to create a graph that requires a color set of large measure.
This online coloring game 
gives us a more convenient setting for proving Theorems \ref{thm:intro_UB} and \ref{thm:intro_LB}. Our online coloring game uses the same fundamental ideas as the online coloring game introduced by Dvo\v{r}\'ak and Kawarabayashi \cite{DK}.
In Section \ref{sec:UB}, we describe a strategy for Alice in our online coloring game that succeeds with a color set of bounded measure, and we show that this strategy implies Theorem \ref{thm:intro_UB}. In Section \ref{sec:LB}, we describe a strategy of Bob in our game that forces Alice to use a color set of large measure, and we show that this strategy implies Theorem \ref{thm:intro_LB}. 

\section{An online coloring game}
\label{sec:game}
In this section, we describe an online coloring game which gives a convenient setting for considering fractional colorings of partial $t$-trees. The game we describe is fundamentally a fractional equivalent of the game introduced by Dvo\v{r}\'ak and Kawarabayashi \cite{DK} to describe proper colorings of partial $t$-trees. However, our formal approach is rather different, so we 
give a full exposition of our game for the sake of completeness.

Given integers $t \geq 1$ and $2 \leq \omega \leq t$, we define the online coloring game for $K_{\omega+1}$-free partial $t$-trees. The game is played by two players, Alice and Bob.
Throughout the game, a graph with red and blue edges is constructed.
As an initial step of the game (which we call Turn $0$), Bob chooses a positive integer $N$, which determines how many turns the game will last. 
Next, 
Alice constructs a graph $G$ consisting of a single $K_t$ with red edges, and she assigns each of the $t$ vertices $u \in V(G)$ a measure-$1$ color set $\phi(u) \subseteq \mathbb R$.
Then, Alice and Bob begin taking turns.
On each turn of the game, Bob first adds a new uncolored vertex $v$ to $G$, so that $G[N(v)]$ is a clique of size $t$. Bob also colors the edges incident to $v$ with the colors red and blue. 
Then, Alice gives $v$ a color set $\phi(v) \subseteq \mathbb R$ of measure $1$ to this new vertex $v$.
Alice must obey the rule that any two vertices joined by a blue edge have disjoint color sets, and Bob must obey the rule that each $K_{\omega + 1}$ subgraph of $G$ has at least one red edge. 
After $N$ turns, the game ends.
During the game, Alice tries to minimize the value $\mu \left (\bigcup_{v \in V(G)} \phi(v)  \right )$, which is the measure of the overall color set that she has used during the game, and Bob tries to maximize this value.

If $G$ is a graph whose edges are colored red and blue, then for each vertex $v \in V(G)$, we write $N_R(v)$ ($N_B(v)$) for the set of vertices in $G$ joined to $v$ by a red (blue) edge. We say that vertices in $N_R(v)$ are \emph{red neighbors} of $v$, and we define \emph{blue neighbors} similarly.


The following lemma shows a certain equivalence between the fractional coloring problem and the online coloring game described above for partial $t$-trees of fixed clique number. The lemma uses the same ideas as \cite[Lemma 5]{DK}.

\begin{lemma}
\label{lem:game}
Let $k,t \geq 1$ and $2 \leq \omega \leq t$ be fixed integers. Every graph $G$ of treewidth $t$ and clique number at most $\omega$ satisfies $\chi_f(G) \leq k$ if and only if Alice has a strategy to complete the online coloring game for $K_{\omega+1}$-free partial $t$-trees using some color set $\mathcal C$ of measure $k$.
\end{lemma}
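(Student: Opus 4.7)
This lemma is an equivalence between the online game and an offline fractional chromatic bound on $K_{\omega+1}$-free partial $t$-trees, and I will prove each direction separately. For ``Alice has a strategy $\Rightarrow \chi_f(G) \leq k$ for every $K_{\omega+1}$-free partial $t$-tree $G$'', I plan to describe a Bob strategy that forces Alice to produce a fractional colouring of $G$. Given such a $G$, I will extend it to a $t$-tree $H$ on vertex set $V(G) \cup \{u_1, \ldots, u_t\}$, where the padding vertices $u_1, \ldots, u_t$ form the root $K_t$ and are disjoint from $V(G)$; this is routine from the tree-decomposition viewpoint (attach a root bag of $t$ new vertices to a width-$t$ tree decomposition of $G$ and expand each bag to a $(t+1)$-clique). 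Bob will play the game along $H$'s partial $t$-tree construction order, colouring an edge blue if and only if it lies in $G$. Every $K_{\omega+1}$ subgraph of $H$ then contains either an edge incident to some padding vertex $u_i$ or an edge of $H \setminus G$, both of which are red, because $G$ itself is $K_{\omega+1}$-free; so Bob's rule is respected. Alice's winning strategy then produces a fractional colouring of $H$ using $\mathcal{C}$, whose restriction to $V(G)$ is a valid fractional colouring of $G$ of measure at most $k$.

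For the reverse direction, I will construct Alice's online strategy by backward induction on the number of turns remaining. Since $N$ is fixed at the outset and at each turn Bob has only finitely many combinatorial choices (finitely many $t$-cliques for back-neighbours, and finitely many colourings of the new edges), the full game tree is finite. Alice's strategy will maintain the invariant that her current partial colouring is compatible with a valid fractional colouring (using $\mathcal{C}$) of the blue subgraph of every possible future completion of the game. The hypothesis, applied to each final blue subgraph, provides the base case. For the inductive step, Alice must select, at each of her turns, a colour set $\phi(v)$ for the new vertex compatible with this invariant across all of Bob's possible next moves; the finiteness of the game tree, together with the fact that the blue subgraph at any intermediate state is itself a $K_{\omega+1}$-free partial $t$-tree satisfying the hypothesis, furnishes the needed flexibility, along the lines of the argument of \cite[Lemma 5]{DK} adapted to measurable colour sets.

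The main obstacle I anticipate is in the reverse direction: the hypothesis supplies offline colourings for every final blue subgraph, but Alice must commit online to colour sets without knowing Bob's future moves. Verifying that Alice can always find a colour set compatible with every possible continuation is the delicate point, and relies crucially on the observation that the blue subgraph at every intermediate stage is itself $K_{\omega+1}$-free and of treewidth at most $t$, so that the hypothesis applies uniformly at every level of the game tree. By contrast, the forward direction's construction of the padded $t$-tree $H$ with a disjoint root $K_t$ is standard, and the verification of Bob's rule is immediate from the $K_{\omega+1}$-freeness of $G$.
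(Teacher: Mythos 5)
Your first direction (Alice has a winning strategy $\Rightarrow$ $\chi_f(G)\leq k$ for every $K_{\omega+1}$-free partial $t$-tree $G$) is essentially the paper's argument: pad $G$'s host $t$-tree with a fresh all-red root $K_t$, colour an edge blue iff it lies in $G$, and feed the construction order to Alice as Bob's moves. That part is fine.

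The reverse direction is where you have a genuine gap. Your plan is backward induction on the finite game tree, with the invariant that Alice's partial colouring is ``compatible with a valid fractional colouring of the blue subgraph of every possible future completion.'' The inductive step requires Alice, upon Bob adding $v$, to pick a \emph{single} set $\phi(v)\subseteq\mathcal C$ that preserves this invariant simultaneously for \emph{all} of Bob's possible continuations. The hypothesis $\chi_f\leq k$ gives you, for each final blue subgraph separately, some measure-$k$ colouring, but these colourings are produced independently and need not agree on the common prefix — and in particular need not agree on what $\phi(v)$ should be. Your stated justification, that ``the blue subgraph at any intermediate state is itself a $K_{\omega+1}$-free partial $t$-tree satisfying the hypothesis,'' does not touch this: colourability of the current graph says nothing about which colourings of it extend to all of Bob's futures, and a badly chosen $\phi(v)$ can be unextendable even though the current graph is colourable. (The same problem already appears at the base case, where Alice's colouring of the initial red $K_t$ must survive every future.) So the ``delicate point'' you flag is not resolved by anything in your outline. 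The paper closes exactly this hole by building one finite universal $t$-tree $H$: for each of the $N$ rounds and each $t$-clique $K$, it attaches $2^t$ children of $K$, one for every red--blue colouring of the new edges (discarding those creating a blue $K_{\omega+1}$), so that every play of Bob embeds into $H$ as an edge-coloured subgraph. The hypothesis is then applied \emph{once}, to the blue subgraph of $H$, and the single resulting colouring $\psi$ is automatically consistent across all branches; Alice simply answers each of Bob's moves by reading off $\psi$ at the corresponding vertex of $H$. Some such simultaneous-embedding (or compactness) device is needed; without it your induction does not go through.
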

\begin{proof}
First, suppose that $\chi_f(G) \leq k$ for every graph $G$ of treewidth $t$ and clique number at most $\omega$. 
We show that Alice has a strategy to complete the online coloring game for $K_{\omega+1}$-free partial $t$-trees using some color set $\mathcal C$ of measure $k$.
Let $N$ be the number of turns selected by Bob.
We define a $t$-tree $H$ with a red-blue edge-coloring as follows. We let $H$ begin with a single $K_t$ with red edges, which we call $K'$. For notational purposes, we write $V(K') = S_{0,K'}$.

Then, for $i = 1, \dots, N$ and for each $t$-clique $K$ in $H$, we iterate the following: 
\begin{quote}
Add a set $S_{i,K}$ of $2^{t}$ new vertices to $H$. For each $v \in S_{i,K}$, let $N(v) = K$. 
Then, edge-color the sets $\{e:v \in e\}$ for $v \in S_{i,K}$ with each of the $2^{t}$ possible red-blue edge-colorings---that is, so $N_R(v) \neq N_R(w)$ holds for each distinct pair $v,w\in S_{i,K}$. Finally, if some $v \in S_{i,K}$ belongs to a blue $K_{\omega + 1}$ in $H$, then delete $v$ from $H$.
\end{quote}
By definition, the graph $H$ is a $t$-tree, and hence the spanning subgraph $H'$ induced by the blue edges of $H$ is a $K_{\omega + 1}$-free partial $t$-tree. 
Hence, there exists a color set $\mathcal C$ of measure $k$ for which there exists a fractional coloring $\psi:V(H) \rightarrow 2^{\mathcal C}$ of $H'$.

We write $G$ for the graph that Bob creates during the game, and we use $\phi$ to denote the coloring function that Alice creates during the game.
Now, we show that Alice has a strategy to complete the game so that $\phi(v) \subseteq \mathcal C$ for each vertex $v$ that appears in $G$ during the game. 
To show this, we prove the stronger statement that Alice has a strategy such that after each Turn $i$, the graph $G$ constructed in the online coloring game appears as a vertex and edge-colored subgraph $H^* \subseteq H$, where each vertex of $H^*$ belongs to a set $S_{j,K}$ for which $j \leq i$. This statement implies that after the end of Turn $N$, $G$ is colored using the color set $\psi(V(H)) \subseteq  \mathcal C$, which is exactly what we need to prove.

We prove this stronger statement by induction on $i$. 
Before Alice and Bob begin taking turns (i.e.~on Turn 0), Alice colors the initial $K_t$ of $G$ as $\psi(S_{0,K'})$, which completes the base case. Now, suppose that $i \geq 1$ and the statement holds for values up to $i-1$. At the beginning of Turn $i$, 
by the induction hypothesis,
$G$ is isomorphic as a vertex and edge-colored subgraph $H^* \subseteq H$ consisting of vertices belonging to sets $S_{j,K}$ for $j \leq i-1$. Now, on Turn $i$, Bob chooses a clique $K$ in $G$ and adds a new vertex $v$ so that $N(v) = K$, and Bob gives a red-blue edge-coloring to the edges incident to $v$. Then, Alice finds the clique $K^*$ in $H^*$ corresponding to $K$ and locates a vertex $v^* \in S_{i,K^*}$ so that
the edge colors incident to $v^*$ correspond with the edge colors that Bob has assigned to the edges incident to $v$. 
We know that $v^*$ was not deleted while constructing $H$, since Bob is not allowed to create a blue $K_{\omega + 1}$ with his edge-coloring.
Then, Alice sets $\phi(v) = \psi(v^*)$. After this, we observe that $G$ is isomorphic to $H^* \cup \{v^*\} \subseteq H$ as a vertex and edge-colored graph, and each vertex of $H$ belongs to a set $S_{j,K}$ for which $j \leq i$. Hence, induction is complete, and Alice completes the game using the color set $\mathcal C$.

On the other hand, suppose Alice has a strategy to complete the online coloring game for $K_{\omega+1}$-free partial $t$-trees using some color set $\mathcal C$ of measure $k$.
We show that every $K_{\omega+1}$-free partial $t$-tree has a fractional coloring using the color set $\mathcal C$.
To this end, let $G$ be a $K_{\omega+1}$-free partial $t$-tree. By definition, there exists a $t$-tree $H$ for which $G$ is a spanning subgraph. We give $H$ a red-blue edge-coloring so that the blue subgraph of $H$ is isomorphic to $G$. Furthermore, since $H$ is a $t$-tree, $H$ may be constructed by starting with a $K_t$ with vertices $v_1, \dots, v_t$ and then iteratively adding vertices whose neighborhoods induce cliques of size $t$ in the existing graph, giving an ordering $v_1, \dots, v_n$ for $V(H)$.
For technical reasons, we add a set $U = \{u_1, \dots, u_t\}$ of $t$ more vertices to $H$, and we add a red edge between each pair of vertices in $U$. Furthermore, for each vertex $v_i \in \{v_1, \dots, v_t\}$, we add a red edge from $v_i$ to $u_i, \dots, u_t$. Then, we order $V(H)$ as $(u_1, \dots, u_t, v_1, \dots, v_n)$. We observe that for each vertex $v_i \in V(H)$, the back-neighbors of $v_i$ form a $t$-clique in $H$, so $H$ is still a $t$-tree.

Now, we give a fractional coloring $\phi:V(H) \rightarrow 2^{\mathcal C}$ to the vertices of $H$ 
by using
Alice's strategy in the online coloring game, as follows. We assume that Bob chooses to play for $N= n$ turns.
First, we
let Alice choose arbitrary measure-$1$ subsets $\phi(u_i) \subseteq \mathcal C$ for each 
$i \in \{1, \dots, t\}$.
 Then, we consider the vertices $v_1, \dots, v_n$ one at a time.
 When each vertex 
 $v_i$ is considered, we interpret this vertex as a move from Bob in the online game, and
we let Alice assign a measure-$1$ subset $\phi(v_i) \subseteq \mathcal C$.
We observe that as $H$ has no blue $K_{\omega+1}$, each vertex that we consider corresponds with a possible move from Bob in the online coloring game for $K_{\omega+1}$-free $t$-trees.
Hence, by following Alice's strategy, we fractionally color $H$ with a function $\phi$ so that $\phi(u) \subseteq \mathcal C$ for each $u \in V(H)$, and so that $\phi(u) \cap \phi(v) = \emptyset$ for each vertex pair $u,v \in V(H)$ joined by a blue edge. 
Then, since the blue subgraph of $H$ is isomorphic to $G$, we find a fractional coloring of $G$ using the color set $\mathcal C$.
Hence, $\chi_f(G) \leq k$, and the proof is complete.
\end{proof}

\section{An upper bound}
\label{sec:UB}
In this section, we show that a partial $t$-tree $G$ with clique number at most $2 \leq \omega \leq t$ satisfies $\chi_f(G) \leq t + \frac{\omega - 1}{t}$, proving Theorem \ref{thm:intro_UB}. In order to prove this upper bound, we define
a strategy for Alice in the online coloring game for $K_{\omega+1}$-free partial $t$-trees, and then we apply Lemma \ref{lem:game}.
Before proving our upper bound, we need the following lemma.
\begin{lemma}
\label{lem:back}
Let $K$ be a clique on $t$ vertices with a red-blue edge coloring. Suppose that each vertex $v \in V(K)$ is assigned a set $\gamma(v)$ of exactly $t$ colors, subject to the following:
\begin{itemize}
\item If $u, v \in V(K)$ are joined by a blue edge, then $\gamma(u) \cap \gamma(v) = \emptyset$, and
\item If $u,v \in V(K)$ are joined by a red edge, then $|\gamma(u) \cap \gamma(v)| = 1$.
\end{itemize}
If the largest blue clique 
in $K$ is of order $\omega$ for some $0 \leq \omega \leq t$, 
then
$\left | \bigcup_{v \in V(K)} \gamma(v) \right | \leq t^2 - t + \omega$.
\end{lemma}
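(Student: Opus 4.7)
The plan is to prove the bound by a greedy argument that counts the newly introduced colors as the vertices of $K$ are revealed one at a time, using a crucial structural fact: the spanning subgraph of $K$ formed by the red edges has at most $\omega$ connected components.

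First I would establish this component bound. Let $R$ be the spanning subgraph of $K$ consisting of its red edges, and suppose for contradiction that $R$ has $k > \omega$ connected components $C_1, \dots, C_k$. Picking one vertex $w_i$ from each $C_i$, any pair $w_i w_j$ with $i \neq j$ is an edge of $K$ whose endpoints lie in different $R$-components, so $w_i w_j$ is not red, hence blue. Then $\{w_1, \dots, w_k\}$ induces a blue clique of order $k > \omega$, contradicting the hypothesis. Therefore $R$ has at most $k \leq \omega$ components.

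Next I would order $V(K)$ as $v_1, \dots, v_t$ so that each red component is traversed contiguously and, within each component, the vertices appear in a BFS or DFS order rooted at some vertex of that component. This guarantees that for every $i$, if $v_i$ is not the root of its component then $v_i$ has a red edge to some earlier vertex $v_j$. Writing $X_i := \gamma(v_i) \setminus \bigl(\gamma(v_1) \cup \cdots \cup \gamma(v_{i-1})\bigr)$ for the set of new colors contributed by $v_i$, I would bound $|X_i| \leq t$ trivially for each of the $k$ roots, and $|X_i| \leq t - 1$ for each of the remaining $t - k$ vertices, since the forced color in $\gamma(v_i) \cap \gamma(v_j)$ guaranteed by the red back-edge has already been counted. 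Summing the contributions gives
\[
\Bigl|\bigcup_{v \in V(K)} \gamma(v)\Bigr| = \sum_{i=1}^{t} |X_i| \leq kt + (t - k)(t - 1) = t^2 - t + k \leq t^2 - t + \omega,
\]
as desired.

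The main obstacle is identifying the right invariant to exploit. Once one realizes that it is the number of connected components of the red subgraph (rather than, say, the total red edge count or some more delicate structure) that controls the savings, both halves of the argument become short: the component bound is a two-line complementation argument, and the greedy count is calibrated exactly to the hypothesis $|\gamma(u) \cap \gamma(v)| = 1$ on red edges.
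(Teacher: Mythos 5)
Your proof is correct, and it follows the same greedy counting skeleton as the paper's — order the vertices, charge each one at most $t$ new colors, and save one color for every vertex that has a red back-edge — but the structural anchor you use to guarantee those back-edges is genuinely different. The paper orders the vertices so that a \emph{maximum blue clique} comes first: its $\omega$ vertices have pairwise disjoint color sets and contribute exactly $t\omega$ colors, and every later vertex must have a red edge into that clique (else the clique could be extended), so it contributes at most $t-1$ new colors, giving $t\omega + (t-\omega)(t-1) = t^2 - t + \omega$. You instead take a spanning forest of the \emph{red subgraph}, observe that a transversal of its components is a blue clique so there are at most $\omega$ components, and let the component roots play the role of the unexempted vertices. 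Both key observations are two-line complementation arguments, so neither route is harder; yours has the minor advantage of yielding the slightly sharper bound $t^2 - t + k$ where $k$ is the number of red components (which can be smaller than $\omega$), while the paper's has the advantage that the first $\omega$ vertices are known to contribute \emph{exactly} $t\omega$ colors, which is occasionally useful when one wants a matching lower bound. For the application in the paper (bounding the forbidden colors in Alice's strategy), either version suffices.
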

\begin{proof}
We order the vertices of $K$ as $v_1, \dots, v_t$ so that $v_1, \dots, v_{\omega}$ induce a maximum blue clique. For each value $j \in \{1, \dots, t\}$, we let $S_j = \bigcup_{i = 1}^{j} \gamma(v_i)$. Observe that $|S_{\omega}| = t \omega$. Furthermore, for each value $i > \omega$, $v_i$ is joined to at least one neighbor in $v_1, \dots, v_{\omega}$ by a red edge, as otherwise $v_1, \dots, v_{\omega}, v_i$ would form a blue clique of size $\omega  +1 $, a contradiction. 
Therefore, for each $i > \omega $, $\gamma(v_i)$ contains at least one color from $S_{i-1}$, and hence $|S_i| \leq |S_{i-1}| + (t-1)$. Hence, $|S_t| \leq  \omega t + (t - \omega) (t-1) = t^2 - t + \omega$, completing the proof.
\end{proof}

Now, we are ready to prove Theorem \ref{thm:intro_UB}, which states that $\chi_f(G) \leq t + \frac{\omega - 1}{t}$ for every $K_{\omega+1}$-free partial $t$-tree $G$. 

\begin{proof}[Proof of Theorem \ref{thm:intro_UB}]
By Lemma \ref{lem:game}, it suffices to show that Alice has a strategy in the online coloring game for $K_{\omega+1}$-free partial $t$-trees by which each vertex $v$
receives a measure-$1$ set $\phi(v)\subseteq (0, t + \frac{\omega-1}{t})$.
We let Alice give each vertex $v$ a set $\phi(v)$ equal to the union of $t$ intervals from the discrete set $\{ (\frac{i-1}{t}, \frac{i}{t} ): 1 \leq i \leq t^2 + \omega - 1\}$.
For ease of presentation, 
we say that Alice assigns each vertex $v$ a set $\gamma(v)$ of exactly $t$ colors from the set $\{1, \dots, t^2 + \omega - 1\}$. Then, we will let $\phi(v) = \bigcup_{i \in\gamma(v)}(\frac{i-1}{t}, \frac{i}{t} ) $.

We prove the stronger statement that Alice has a strategy to assign color sets $\gamma(v)$ of size $t$ to vertices $v$ in the graph $G$ created throughout the game while satisfying the additional condition
that
$\gamma(v) \cap \gamma(w) = \emptyset$ for any two vertices $v,w \in V(G)$ joined by a blue edge, and so that $|\gamma(v) \cap \gamma(w)| = 1$ for any two vertices $v, w \in V(G)$ joined by a red edge.


When the game begins, 
$G$ consists of vertices $v_1, \dots, v_t$ forming a red clique.
Alice gives each vertex $v_i$ a set $\gamma(v_i)$ so that $\gamma(v_i) \cap \gamma(v_j) = \{1\}$ holds for each distinct vertex pair $v_i, v_j$. This is possible, as Alice needs only $t (t-1) + 1 < t^2 + \omega - 1$ colors for this step. After this initial phase of the game, Alice's coloring satisfies our condition.

Now, suppose that Bob has added a vertex $v$ to $G$ so that $N(v)$ induces a $t$-clique $K$ in $G$. We observe that for each red neighbor $u$ of $v$,
$u$ has $t-1$ neighbors $w \in V(K),$ and $\gamma(v)$ shares at most one color with each set $\gamma(w)$; thus, $\gamma(u)$ contains some color $c_u$ that does not appear in $\gamma(V(K) \setminus \{u\})$. 
For each red neighbor $u$ of $v$, Alice adds such a color $c_u$ to $\gamma(v)$.

Next, we claim that Alice can complete $\gamma(v)$ to a set of $t$ colors from $\{1, \dots, t^2 + \omega - 1\}$ while satisfying our condition. We let $\ell = |N_B(v)|$. Since Alice's previous step puts $t - \ell$ colors in $\gamma(v)$, Alice still needs to give $\ell$ additional colors to $\gamma(v)$. 
We say that the colors of $\bigcup_{u \in N_R(v)} \gamma(u)$ are \emph{forbidden}, as these colors cannot further be added to $\gamma(v)$. 
There are at most $ t ( t - \ell)$ such forbidden colors.
Next, we consider two cases.
\begin{enumerate}
\item If $\ell \leq \omega - 1$, then the $\ell$ blue neighbors of $v$ have color sets altogether containing at most $t \ell $ colors, and we say that these colors are forbidden.
In total, at most $t (t - \ell) + t \ell = t^2$ colors are forbidden. 
Since any color that is not forbidden can still be added to $\gamma(v)$, Alice has at least $\omega - 1 \geq \ell$ legal colors available with which to complete the set $\gamma(v)$.
\item If $\ell \geq \omega $, then we consider the clique $K'$ induced by the $\ell$ blue neighbors of $v$. Since the largest blue clique in $G$ has $\omega$ vertices, $K'$ cannot contain a blue $K_{\omega}$-subgraph, and hence the largest blue clique in $K'$ is of order at most $\omega - 1$. By Lemma \ref{lem:back}, the set $\gamma(V(K'))$ contains at most $\ell^2  - \ell + \omega - 1$
colors, and we say that these colors are forbidden. Therefore, considering the at most $t (t - \ell)$ colors 
which are already forbidden, altogether at most $t(t - \ell) + \ell^2 - \ell + \omega - 1$ colors are forbidden. However, it holds that 
\[ (t^2 + \omega - 1) - (t(t - \ell) + \ell^2 - \ell + \omega - 1) \geq \ell,\]
since simplification shows that this inequality is equivalent to $\ell(t - \ell) \geq 0$.
Therefore, since any color that is not forbidden can still be added to $\gamma(v)$, Alice has at least $\ell$ legal colors with which to complete the set $\gamma(v)$.
\end{enumerate}
Therefore, for any move that Bob makes, Alice has a legal response that maintains the condition that $\gamma(v) \in \binom{ [ t^2 - \omega + 1]}{t}$ for each $v \in V(G)$ and $|\gamma(v) \cap \gamma(w)| = 1$ for each red edge $vw \in E(G)$. Therefore,
letting $\phi(v) = \bigcup_{i \in\gamma(v)}  (\frac{i-1}{t}, \frac{i}{t} ) $
for each $v \in V(G)$,
Alice can complete the online coloring game for $K_{\omega + 1}$-free partial $t$-trees with the color set $(0,t+\frac{\omega - 1}{t})$. Then, by Lemma \ref{lem:game}, 
every partial $t$-tree of clique number 
$\omega$ has fractional chromatic number at most $t + \frac{\omega - 1}{t}$, completing the proof.
\end{proof}

\section{Lower bounds}
\label{sec:LB}
In this section, we construct partial $t$-trees with clique number $ (\frac{1}{2} + \epsilon)t < \omega \leq t$ and with large fractional chromatic number. Our constructions imply Theorem \ref{thm:intro_LB}. Rather than directly constructing these partial $t$-trees, we describe strategies by which Bob can force Alice to use a color set of large measure in the online coloring game. Then, the partial $t$-trees attaining large fractional chromatic numbers can be obtained from the method of Lemma \ref{lem:game}.

When we describe Bob's strategies in the online coloring game, it will often be convenient to allow Bob to
add vertices $v$ to the constructed graph $G$ so that $N(v)$ forms a clique with fewer than $t$ vertices. 
The following lemma shows that the online coloring game does not change even if we give Bob this extra freedom. 

\begin{lemma}
\label{lem:small_clique}
Let $t \geq 1$ be an integer.
If $H$ is a $t$-tree, then every clique $K$ in $H$ of size at most $t$ is a subgraph of a $t$-clique $K'$.
\end{lemma}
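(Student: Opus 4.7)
The plan is to prove this by induction on $|V(H)|$. For the base case, $|V(H)| = t$, which forces $H \cong K_t$; any clique $K$ is already contained in $H$ itself, and this $H$ serves as the required $t$-clique $K'$.

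For the inductive step, I will fix a construction sequence for $H$ as a $t$-tree and let $v$ be its last vertex, so that $H' := H - v$ is a smaller $t$-tree and $N_H(v)$ induces a $t$-clique $K_v$ in $H'$. Given a clique $K$ in $H$ with $|V(K)| \leq t$, I split into cases based on whether $v \in V(K)$. When $v \notin V(K)$, the clique $K$ lives entirely inside $H'$, and the inductive hypothesis hands me a $t$-clique $K' \supseteq K$ in $H'$, which is also a $t$-clique in $H$.

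When $v \in V(K)$, every vertex of $V(K) \setminus \{v\}$ is a neighbor of $v$, and hence lies in $V(K_v)$. Since $|V(K) \setminus \{v\}| \leq t-1$ while $|V(K_v)| = t$, I can choose a $(t-1)$-subset $S \subseteq V(K_v)$ containing $V(K) \setminus \{v\}$. Then $S \cup \{v\}$ induces a $t$-clique in $H$: pairs inside $S$ are adjacent because $K_v$ is a clique, and each vertex of $S$ is adjacent to $v$ because $S \subseteq N_H(v)$. This $t$-clique contains $K$, finishing the inductive step.

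I do not foresee a serious obstacle. The only technical point is confirming that peeling off the last vertex of a construction sequence leaves a smaller $t$-tree, which is immediate from the definition. The heart of the argument is the observation that a clique passing through a newly added vertex $v$ is forced into $v$'s back-neighborhood, which is already a $K_t$ and therefore supplies enough room to complete $K$ to a $t$-clique.
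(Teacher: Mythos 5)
Your proof is correct and follows essentially the same route as the paper's: induction on $|V(H)|$, peeling off the last-added vertex $v$, applying the inductive hypothesis when $v \notin V(K)$, and otherwise noting that $K$ sits inside the $(t+1)$-clique on $N[v]$ so it extends to a $t$-clique there. The only cosmetic difference is that you select a $(t-1)$-subset of $N(v)$ explicitly while the paper deletes one vertex from $N[v]$; these are the same argument.
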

\begin{proof}
We induct on the number of vertices in $H$. If $H$ has $t$ vertices, then $H \cong K_t$, so the claim clearly holds. Now, suppose that the claim holds for a $t$-tree $H$. We show that if we add a vertex $v$ to $H$ so that $N(v)$ induces a $t$-clique, then the claim holds for $H \cup \{v\}$. Consider a clique $K \subseteq H \cup \{v\}$. If $K \subseteq H$, then the claim holds by the induction hypothesis. Otherwise, $v \in K$, and hence $V(K) \subseteq N[v]$. Then, $K$ is a subgraph of the $(t+1)$-clique with vertex set $N[v]$, and thus $K$ can be extended to a $t$-clique $K'$ obtained by deleting some vertex from $N[v]$.
\end{proof}
Hence, by Lemma \ref{lem:small_clique}, if Bob wishes to add a vertex $v$ to $G$ so that $N(v)$ induces a clique $K$ on fewer than $t$ vertices, Bob may first add $v$ in this way, and then he may
choose a clique $K' \supseteq K$ on $t$ vertices and add a red edge between $v$ and each vertex of $K' \setminus K$. These extra red edges do not affect Alice's legal moves, and they can only help Bob later in the game. Therefore,
when we describe Bob's strategies in this section, we often let Bob add vertices $v$ to $G$ so that $|N(v)| < t$, and we will tacitly assume that Bob adds extra red edges incident to $v$ so as to make his move legal within the game's formal rules.

For integers $t \geq 1$ and $1 \leq \omega \leq t+1$, we define the function $f(t,\omega) = \sup \{\chi_f(G): G \in \mathcal G_{t,\omega}\}$, where $\mathcal G_{t, \omega}$ is the set of all graphs of treewidth $t$ and clique number at most $\omega$.
Theorem \ref{thm:intro_UB} shows that $f(t,\omega) \leq t + \frac{\omega-1}{t}$. We will be interested in finding constructions that give lower bounds for $f(t, \omega)$.

In the following theorem, we describe a recursive construction for a partial $t$-tree which gives a recursive lower bound for $f(t, \omega)$.

\begin{theorem}
\label{thm:recursion}
If $\frac{t+1}{2} \leq \omega \leq t$, then
\[f(t,\omega ) \geq t + 1 - \sum_{i = 1}^{t - \omega + 1 } \frac{1}{f(t-2i+1,\omega - i + 1)}. \]
\end{theorem}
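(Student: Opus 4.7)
The plan is to apply Lemma \ref{lem:game} and exhibit a strategy by which Bob forces Alice to complete the online coloring game for $K_{\omega+1}$-free partial $t$-trees only with a color set of measure at least $t+1 - \sum_{i=1}^{s} 1/a_i$, where $s = t-\omega+1$ and $a_i := f(t-2i+1, \omega-i+1)$. I would proceed by induction on $\omega$ with $t$ fixed, taking as base case $\omega = t+1$, which gives $f(t,t+1) = t+1$ via Bob's move of adding a single vertex blue-joined to all of the initial $K_t$ to form $K_{t+1}$. For the inductive step we combine the inductively-known strategies for the smaller pairs $(t-2i+1, \omega-i+1)$.

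After Alice initializes the $K_t$ on vertices $\{u_0, u_1, \dots, u_{t-1}\}$, Bob plays $s$ inner games concurrently. For each $i \in \{1, \dots, s\}$, the $i$-th inner game is Bob's inductive strategy for parameters $(t-2i+1, \omega-i+1)$, played on the sub-clique $C_i := \{u_{i-1}, u_i, \dots, u_{t-i-1}\}$ of size $t-2i+1$; this placement is possible because the hypothesis $\omega \geq (t+1)/2$ gives $s \leq (t+1)/2$. Writing $A_i := \{u_0, \dots, u_{i-2}\}$ and $B_i := \{u_{t-i}, \dots, u_{t-1}\}$, each time Bob adds a vertex $w$ in the $i$-th inner game, its outer-game neighborhood $N(w)$ is the union of the sub-game's $(t-2i+1)$-clique with $A_i \cup B_i$, giving $|N(w)| = t$; Bob colors the edges from $w$ to $A_i$ \emph{blue} and the edges from $w$ to $B_i$ \emph{red}. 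I would verify legality as follows: $A_i$ is an independent set in the blue graph (since the initial $K_t$ has red edges), so any blue clique involving inner-game vertices contains at most one vertex from $A_i$, bounding its size by $(\omega-i+1)+1 \leq \omega$ for $i \geq 2$ and by $\omega$ for $i=1$ since $A_1 = \emptyset$. Since distinct inner games share no added vertices and no cross-edges are introduced, blue cliques stay within a single sub-game.

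The heart of the argument is the measure bookkeeping. By the inductive hypothesis, for each $i$ the set $\phi(C_i) \cup \phi(W_i)$ (where $W_i$ denotes the added vertices of the $i$-th inner game) has measure at least $a_i$, and since each $w \in W_i$ is blue-adjacent to all of $A_i$ we have $\phi(W_i) \cap \phi(A_i) = \emptyset$. Together these two facts yield, for each $i$, the inequality
\[ \mu(\phi(A_i) \setminus \phi(C_i)) \;\leq\; M - a_i, \]
where $M$ denotes Alice's total color measure. The plan is then to combine these $s$ inequalities along the nested chain $A_1 \subsetneq A_2 \subsetneq \cdots \subsetneq A_s$, extracting from each step the contribution of the newly added vertex $u_{i-1} \in A_i \setminus A_{i-1}$, so that the $i$-th inner game contributes a deficit of exactly $1/a_i$ to the aggregate bound. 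The main obstacle will be executing this bookkeeping rigorously: because the initial $K_t$ has red edges, Alice has the freedom to overlap her initial color assignments in arbitrary ways, and tracking how these overlaps propagate through the $s$ inner-game constraints requires a careful weighted or dual-LP argument to turn the individual measure bounds into the desired aggregate bound $M \geq t + 1 - \sum 1/a_i$.
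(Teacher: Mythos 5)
Your construction is genuinely different from the paper's, and unfortunately the gap you flag at the end (``the main obstacle will be executing this bookkeeping rigorously'') is not a bookkeeping issue that a cleverer weighted or dual-LP argument can repair: the construction itself does not force the claimed measure. Two concrete problems. First, since the initial $K_t$ is entirely red, Alice may assign all $t$ initial vertices the \emph{same} measure-$1$ color set, so $\mu(\phi(A_i)\setminus\phi(C_i))=0$ for every $i$ and your inequalities collapse to $M\geq \max_i a_i$. Second, your $s$ inner games pairwise share no added vertices and no edges between added vertices, so Alice may color them independently, reusing one fixed color reservoir across all of them; she then completes your entire game with measure roughly $\max_i a_i = f(t-1,\omega)$. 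Already for $\omega=t$ this is $\leq t$ (every partial $(t-1)$-tree is $(t-1)$-degenerate, so $f(t-1,t)\leq t$), which is strictly below the target $t+1-\tfrac{1}{f(t-1,t)} = t+1-\tfrac1t$. So Alice beats your strategy, and no aggregation of the inequalities $\mu(\phi(A_i)\setminus\phi(C_i))\leq M-a_i$ can prove the theorem. A related structural symptom: your witness set never has $t+1$ vertices with pairwise (almost) disjoint colors, so there is no source for the leading term $t+1$.

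The paper's proof supplies the two ideas your proposal is missing. (1) \emph{Controlled overlaps via scaling:} Bob adds a fresh vertex $v_i$ (blue to everything chosen so far, so $\phi(v_i)$ is disjoint from all previous choices) and then builds, entirely red-adjacent to $v_i$, a graph $H_i$ of treewidth $t-2i+1$ and clique number $\leq\omega-i+1$ with $\chi_f(H_i)\geq f(t-2i+1,\omega-i+1)-\epsilon$. If every vertex $w$ of $H_i$ had $\mu(\phi(w)\cap\phi(v_i))>\tfrac{1}{f(\cdot)-\epsilon}$, then rescaling the sets $\phi(w)\cap\phi(v_i)$ inside the measure-$1$ set $\phi(v_i)$ would fractionally color $H_i$ with total measure $<\chi_f(H_i)$, a contradiction; hence some $u_i\in V(H_i)$ has $\mu(\phi(u_i)\cap\phi(v_i))\leq\tfrac{1}{f(t-2i+1,\omega-i+1)-\epsilon}$. (2) \emph{A $(t+1)$-vertex witness:} the adaptively chosen $u_1,\dots,u_{t-\omega+1}$, the $v_1,\dots,v_{t-\omega+1}$, and $2\omega-t-1$ further vertices form a $K_{t+1}$ that is blue except for the red matching $\{u_jv_j\}$; since no three of these color sets meet, inclusion--exclusion gives measure at least $t+1-\sum_j\mu(\phi(u_j)\cap\phi(v_j))$. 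Note also that the recursion needs no induction on $\omega$: the terms $f(t-2i+1,\omega-i+1)$ enter only through the existence, for each $\epsilon>0$, of a graph in $\mathcal G_{t-2i+1,\omega-i+1}$ with fractional chromatic number within $\epsilon$ of that value, which is immediate from the definition of $f$ as a supremum.
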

\begin{proof}
We choose $\epsilon > 0$ to be an arbitrarily small value.
We consider the online coloring game for $K_{\omega + 1}$-free partial $t$-trees, and we write $\phi$ for Alice's coloring function.
We show that
Bob can force Alice to use a color set of measure at least 
\[t + 1 - \sum_{i = 1}^{t-\omega+1} \frac{1}{f(t-2i+1,\omega - i + 1) - \epsilon} . \] 

Bob iterates through $i =  1, \dots, t-\omega+1$, completing iteration $i$ as follows.
At the beginning of iteration $i$,
we assume that
Bob has already added
vertices forming a set $U = \{u_1, \dots, u_{i-1}, v_1, \dots, v_{i-1}\}$. 
We also assume that Bob has colored every edge $u_j v_j$ red for $j \in \{1, \dots, i-1\}$, and we assume that Bob has colored every other edge of $G[U]$ blue.
Bob begins iteration $i$ by adding a vertex $v_i$ and letting every vertex of $U$ be a blue neighbor of $v_i$. Next, Bob constructs a graph $H_i$ of treewidth $t-2i + 1$ and clique number at most $\omega - i + 1$ with fractional chromatic number at least $f(t-2i+1, \omega - i+1) - \epsilon$,
in which each vertex of $H_i$ has $v_i$ as a red back-neighbor and every vertex of $U$ as a blue back-neighbor.
Since the largest blue clique induced by $U$ 
has size $i-1$, and since the largest blue clique in $H_i$ contains at most $\omega - i + 1$ vertices, Bob does not create a blue $K_{\omega + 1}$. Furthermore, each time Bob adds a new vertex $u$ to $H_i$, $u$ has at most $t-2i+1$ back-neighbors in $H_i$, as well as $2i-1$ back-neighbors $U \cup \{ v_i \}$, so $u$ altogether has at most $t$ back-neighbors.
Since each vertex of $U$ is adjacent to each vertex of $H_i$, the back-neighbors of $u$ form a clique.
Therefore, each of Bob's moves is legal. 
We make the following claim.

\begin{claim}
$H_i$ contains some vertex $u_i$ satisfying $\mu ( \phi(u_i) \cap \phi(v_i) ) \leq \frac{1}{f(t-2i+1,\omega - i  + 1) - \epsilon } $. 
\end{claim}
\begin{proof}
Suppose that the claim does not hold. Then, since $|V(H_i)|$ is finite, it holds that for a sufficiently small value $\alpha > 0$,
each vertex of $H_i$ shares at least $\frac{1}{f(t-2i+1,\omega - i + 1) - \epsilon - \alpha}$ colors with $\phi(v_i)$. We define, for each vertex $w \in V(H_i)$, the set $S_w = \phi(w) \cap \phi(v_i)$ of measure at least $\frac{1}{f(t-2i+1, \omega - i + 1) -  \epsilon - \alpha}$, so that $S_w \cap S_{w'} = \emptyset$ for each pair of neighbors $w$ and $w'$ and so that $\mu \left (\bigcup_{w \in H_i} S_w \right ) \leq 1$. Then, for each $w \in V(H_i)$, we assign the color set 
\[\psi(w)  =  \left \{({f(t-2i+1,\omega - i + 1) - \epsilon - \alpha} ) x : x \in S_w \right \}.\] 
We note that $\mu (\psi(w)) \geq 1$ for each $w \in V(H_i)$, and $\psi(w) \cap \psi(w') = \emptyset$ for each edge $w w' \in E(H_i)$.
We note further that since $\phi(v_i)$ has measure $1$,
 $\mu \left ( \bigcup_{w \in H_i} \psi(w) \right ) \leq f(t-2i+1,\omega - i+1) -  \epsilon - \alpha $.
This implies that $\chi_f(H_i) < f(t-2i+1,\omega - i+ 1) - \epsilon $, which is a contradiction. Thus, the claim holds.
\end{proof}
By the claim, Bob chooses some vertex $u_i \in V(H_i)$ for which $|\phi(u_i) \cap \phi(v_i)| \leq \frac{1}{f(t-2i+1,\omega - i + 1) - \epsilon }$. 
This completes iteration $i$. We observe that by construction, all edges induced by the set $\{u_1, \dots, u_i, v_1, \dots, v_i\}$ are blue, except for the edges $u_j v_j$ for $1 \leq j \leq i$, so the condition that we assumed at the beginning of iteration $i$ is still valid for the next iteration.

After completing all $t - \omega + 1$ iterations, Bob adds vertices vertices $w_1, \dots, w_{2 \omega - t - 1}$ so that $w_i$ has back-neighbors \[v_1, \dots, v_{t - \omega + 1}, u_1, \dots, u_{t - \omega + 1}, w_1, \dots w_{i-1}.\]
We observe that the vertices $u_1, \dots, u_{t - \omega + 1}, v_1, \dots, v_{t - \omega + 1}, w_1, \dots, w_{2 \omega - t - 1}$ form a clique $K'$ in $G$ with a red perfect matching of size $t-\omega + 1$ and all other edges colored blue. Therefore, no three vertices of $K'$ have a nonempty intersection of their color sets, and hence by the inclusion-exclusion principle,
 \[ \mu \left (\phi(V(K')) \right ) \geq t + 1 - \sum_{i = 1}^{t-\omega+1} \frac{1}{f(t-2i+1,\omega - i+1) - \epsilon }  .\]
 Hence, by Lemma \ref{lem:game}, 
 $f(t,\omega) \geq t + 1 - \sum_{i = 1}^{t-\omega+1} \frac{1}{f(t-2i+1,\omega - i + 1)- \epsilon }  $. 
  Letting $\epsilon$ tend to $0$ completes the proof.
\end{proof}

While Theorem \ref{thm:recursion} appears slightly unwieldy, it has a number of corollaries that are clearer to state.
First, we can use the trivial lower bound $f(t, \omega) \geq \omega$
and the construction in Theorem \ref{thm:recursion}
to obtain the following corollary.
\begin{corollary}
\label{cor:fixed_r}
For $t \geq 1$ and $ \frac{1}{2}(t+1) \leq \omega \leq t$, 
there exists a graph $G$ of treewidth $t$ and clique number $\omega$
satisfying 
\[\chi_f(G) \geq t + 1 - \sum_{i = 1}^{t - \omega + 1 } \frac{1}{\omega - i + 1}.\]
\end{corollary}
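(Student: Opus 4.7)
The plan is to apply Theorem~\ref{thm:recursion} and then substitute the trivial lower bound $f(t', \omega') \geq \omega'$ into each denominator of the resulting sum. This trivial bound holds because $\chi_f(K_{\omega'}) = \omega'$, and $K_{\omega'}$ is a partial $t'$-tree---and hence belongs to $\mathcal{G}_{t', \omega'}$---whenever $\omega' \leq t' + 1$.

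Before substituting, I would verify that this admissibility condition is met at every index of the sum. For $i \in \{1, \dots, t - \omega + 1\}$ I need $K_{\omega - i + 1} \in \mathcal{G}_{t - 2i + 1, \omega - i + 1}$, which requires $(\omega - i + 1) - 1 \leq t - 2i + 1$. This rearranges to $i \leq t - \omega + 1$, exactly the range of summation. The hypothesis $\omega \geq (t+1)/2$ additionally guarantees that the parameters $t - 2i + 1$ and $\omega - i + 1$ remain nonnegative (respectively, at least $1$) at the endpoint $i = t - \omega + 1$, so each term $f(t - 2i + 1, \omega - i + 1)$ is well-defined.

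Substituting $f(t - 2i + 1, \omega - i + 1) \geq \omega - i + 1$ into the bound from Theorem~\ref{thm:recursion} immediately yields
\[ f(t, \omega) \;\geq\; t + 1 - \sum_{i=1}^{t - \omega + 1} \frac{1}{\omega - i + 1}, \]
which is precisely the claimed inequality. A concrete witnessing graph $G$ can be produced by running Bob's strategy from the proof of Theorem~\ref{thm:recursion} with each auxiliary graph $H_i$ chosen explicitly to be $K_{\omega - i + 1}$; since this clique attains its fractional chromatic number exactly, no $\epsilon$-limit argument is needed here, and the final clique $K'$ constructed at the end of Bob's strategy has exactly $\omega$ pairwise-blue vertices, which gives $\omega(G) = \omega$ as required.

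There is no real obstacle: the corollary is essentially a one-step substitution into Theorem~\ref{thm:recursion}, supplemented only by the routine check that the clique bound $f(t', \omega') \geq \omega'$ is available at every index of the sum and that the explicit choice $H_i = K_{\omega - i + 1}$ in the recursion produces a single graph of clique number exactly $\omega$.
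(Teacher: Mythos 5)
Your proof is correct and follows exactly the route the paper intends: the paper derives Corollary~\ref{cor:fixed_r} precisely by substituting the trivial bound $f(t',\omega')\geq\omega'$ into the recursion of Theorem~\ref{thm:recursion}. Your additional checks (that the parameters $t-2i+1$ and $\omega-i+1$ stay admissible over the whole range of summation, and that taking $H_i=K_{\omega-i+1}$ realizes the bound with clique number exactly $\omega$) are sound and only make explicit what the paper leaves implicit.
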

Corollary \ref{cor:fixed_r} shows that 
the upper bound of $t + 1 - \frac{1}{t}$ for the fractional chromatic number of graphs of treewidth $t$ and clique number $t$ is tight. 
For a fixed value $r \geq 0$ and large $t$, Corollary \ref{cor:fixed_r} also tells us that there exists a graph $G$ of treewidth $t$ and clique number $t - r$ satisfying
\[\chi_f(G) \geq t + 1 - \frac{(1 + o(1))(r+1)}{t}.\]
Furthermore, using a harmonic sum, Corollary \ref{cor:fixed_r} shows that for all values $t \geq 1$ and $ \frac{1}{2} (t+ 1) \leq \omega \leq t$,
\begin{equation}
\label{eqn:t}
f(t, \omega) >  t - \log t = (1 + o(1))t,
\end{equation}
which gives us the asymptotic growth rate of $f(t,\omega)$ for all  $\omega \geq \frac{1}{2}(t + 1)$. Using this fact, we can obtain the following tighter corollary, which implies Theorem \ref{thm:intro_LB},
\begin{corollary}
\label{cor:asy}
Let $0 < c < \frac{1}{2}$, and let $t$ be a large integer. If $\omega = \lfloor (1 - c)t \rfloor$, then  
\[ f(t, \omega)  \geq t + 1 + \frac{1}{2}\log (1-2c) - o(1).\]
\end{corollary}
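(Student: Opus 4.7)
The plan is to apply Theorem \ref{thm:recursion} directly and then estimate each summand using the lower bound $f(t',\omega') > t' - \log t'$ from inequality (\ref{eqn:t}). With $\omega = \lfloor(1-c)t\rfloor$, we have $t - \omega + 1 = \lceil ct \rceil + O(1)$, and Theorem \ref{thm:recursion} gives
\[f(t,\omega) \geq t + 1 - \sum_{i=1}^{t-\omega+1} \frac{1}{f(t-2i+1,\, \omega - i + 1)}.\]
The goal is therefore to show that the subtracted sum is at most $-\tfrac{1}{2}\log(1-2c) + o(1)$.

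First I would verify that the hypothesis needed to invoke (\ref{eqn:t}) holds at each level. Setting $t_i = t - 2i + 1$ and $\omega_i = \omega - i + 1$, a short calculation shows that $\omega_i \geq (t_i+1)/2$ is equivalent to $\omega \geq t/2$, which holds for large $t$ since $c < 1/2$. Moreover, the smallest value of $t_i$ in the range is $t_{t-\omega+1} = 2\omega - t - 1 \geq (1-2c)t - O(1)$, which tends to infinity with $t$. Hence (\ref{eqn:t}) yields $f(t_i, \omega_i) \geq t_i - \log t_i$ for every $i$ in the range, and so
\[\frac{1}{f(t_i, \omega_i)} \leq \frac{1}{t_i - \log t_i} = \frac{1 + o(1)}{t_i},\]
with an error term $o(1)$ that is uniform in $i$ because $t_i \geq (1-2c)t - O(1)$ throughout.

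Next I would evaluate the resulting sum. As $i$ runs from $1$ to $t-\omega+1$, the value $t_i$ traverses the odd integers in the interval $[2\omega - t - 1,\, t-1]$. A standard comparison with the integral of $1/x$ gives
\[\sum_{i=1}^{t-\omega+1} \frac{1}{t_i} = \frac{1}{2}\log\frac{t-1}{2\omega - t - 1} + o(1) = \frac{1}{2}\log\frac{1}{1-2c} + o(1),\]
after substituting $\omega = \lfloor(1-c)t\rfloor$. Combining this with the pointwise estimate above and plugging into Theorem \ref{thm:recursion} yields
\[f(t,\omega) \geq t + 1 - (1+o(1))\left(-\tfrac{1}{2}\log(1-2c) + o(1)\right) = t + 1 + \tfrac{1}{2}\log(1-2c) - o(1),\]
which is the desired inequality.

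The main obstacle is bookkeeping rather than conceptual: one must check that the $o(1)$ slack produced by (\ref{eqn:t}) is genuinely uniform across the indices $i$ occurring in the recursion, and that the harmonic approximation holds down to the smallest summand index $t_i \approx (1-2c)t$, not merely in the limit $t_i \to \infty$ for each fixed $i$. Both points follow from the fact that $(1-2c)t \to \infty$ with $t$, so that a single $o(1)$ function of $t$ controls the error throughout.
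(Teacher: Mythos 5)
Your proposal is correct and follows essentially the same route as the paper: apply Theorem \ref{thm:recursion}, use the bound (\ref{eqn:t}) to replace each $f(t-2i+1,\omega-i+1)$ by $(1+o(1))(t-2i+1)$ after checking the hypothesis $\omega_i \geq \frac{1}{2}(t_i+1)$ and that $t_i = \Omega(t)$ throughout, and then compare the resulting harmonic-type sum with $\frac{1}{2}\int dx/x$ to obtain $\frac{1}{2}\log\frac{1}{1-2c}+o(1)$. Your added attention to the uniformity of the $o(1)$ error across the indices $i$ is a point the paper treats more tersely, but the argument is the same.
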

We compare this lower bound with the upper bound $f(t, \omega) \leq t + 1 - c$ given by Theorem \ref{thm:intro_UB}.
Using the approximation $\log(1-2c) \approx -2c - 2c^2$, we see that when 
$c$ is small, this lower bound is within roughly $c^2$ of being best possible.
\begin{proof}[Proof of Corollary \ref{cor:asy}]
We set $\omega = \lfloor (1-c)t \rfloor$ and prove that 
$f(t, \omega) \geq 
t + 1 + \frac{1}{2}\log (1-2c) - o(1).
$
We begin with the recursive bound from Theorem \ref{thm:recursion}. 
\[f(t,\omega ) \geq t + 1 - \sum_{i = 1}^{t - \omega + 1 }  \frac{1}{f(t-2i+1,\omega - i + 1
)}. \]
Since $\omega > \frac{1}{2}t$,
it follows that $\omega - i + 1 > \frac{1}{2}(t - 2i + 2) $. Furthermore, for each value $i$ in our sum, $t - 2i + 1 = \Omega(t)$. Therefore, it follows from Equation (\ref{eqn:t}) that for each value $i$ in the sum, 
$f(t - 2i + 1, \omega - i) = (1 + o(1)) (t - 2i+1)$. Therefore, we find that 
\[f(t, \omega) \geq t + 1 - (1+o(1))\sum_{i = 1}^{t - \omega + 1} \frac{1}{t - 2i + 1}  \geq t + 1 - (1 + o(1)) \sum_{x = \lfloor \frac{2\omega - t - 1}{2} \rfloor }^{\lfloor \frac{t-1}{2} \rfloor } \frac{1}{2x}.\]
For a strictly decreasing positive function $g(x)$, it holds that 
$\sum_{x = a}^b 
g(x) < g(a) + \int_a^b g(x) dx $,
so we approximate the sum in our bound with an integral and find that
\begin{eqnarray*}
f(t,\omega) &>& t + 1 - (1+o(1)) \int_{\lfloor \frac{2\omega - t - 1}{2} \rfloor}^{\lfloor \frac{t-1}{2} \rfloor} \frac{1}{2x} dx \\
&=& t + 1 + \left (\frac{1}{2}+o(1) \right ) \log (1 - 2c).
\end{eqnarray*}
This completes the proof.
\end{proof}

\section{Acknowledgments}
I am grateful to Ladislav Stacho, Toma\v{s} Masa\v{r}\'ik, and Jana Novotn\'a for helpful discussions about this topic.

\bibliographystyle{plain}
\bibliography{bib}
\end{document}